\title{The circle transfer and cobordism categories} 
\author{Jeffrey Giansiracusa} 
\address{Department of Mathematics\\Swansea University\\ Singleton Park\\Swansea SA2 8PP\\UK} 
\email{j.h.giansiracusa@swansea.ac.uk}
\date{10 May, 2018}
\DeclareSymbolFont{cmlargesymbols}{OMX}{cmex}{m}{n}
\DeclareMathSymbol{\mycoprod}{\mathop}{cmlargesymbols}{"60}
\let\coprod\mycoprod
\newtheorem{theorem}{Theorem}[section] 
\newtheorem{lemma}[theorem]{Lemma} 
\newtheorem{proposition}[theorem]{Proposition}
\newcommand{\Obj}{\mathrm{Obj}}
\newcommand{\Mor}{\mathrm{Mor}}
\newcommand{\Map}{\mathrm{Map}}
\newcommand{\Diff}{\mathrm{Diff}}
\newcommand{\Aut}{\mathrm{Aut}}
\newcommand{\Emb}{\mathrm{Emb}}
\newcommand{\id}{\mathrm{id}}
\newcommand{\C}{\mathbb{C}}
\newcommand{\R}{\mathbb{R}}
\newcommand{\Smash}{\wedge}
\newcommand{\trf}{\mathrm{trf}}
\newcommand{\ev}{\mathrm{ev}}
\DeclareMathOperator*{\colim}{colim} 
\newcommand{\CIRC}{{\mathcal{C}\mathit{irc}}}
\newcommand{\COB}{{\mathscr{C}_1}}
\newcommand{\SURF}{{\mathscr{C}_2}}
\newcommand{\CYL}{{\mathcal{C}\mathit{yl}}}
\begin{document}

\begin{abstract}
  The circle transfer $Q\Sigma (LX_{hS^1})_+ \to QLX_+$ has appeared in several contexts in
  topology.  In this note we observe that this map admits a geometric re-interpretation as a
  morphism of cobordism categories of 0-manifolds and 1-cobordisms.  Let $\COB(X)$ denote the
  1-dimensional cobordism category and let $\CIRC(X)\subset \COB(X)$ denote the subcategory whose
  objects are disjoint unions of unparametrised circles.  Multiplication in $S^1$
  induces a functor $\CIRC(X) \to \CIRC(LX)$, and the composition of this functor
  with the inclusion of $\CIRC(LX)$ into $\COB(LX)$ is homotopic to the circle transfer.  As a
  corollary, we describe the inclusion of the subcategory of cylinders into the 2-dimensional
  cobordism category $\SURF(X)$ and find that it is null-homotopic when $X$ is a point.
\end{abstract}
\maketitle

\section{Introduction}\label{sec-introduction}

The circle transfer from stable homotopy theory appears in many contexts in algebraic topology.
The purpose of this note is to relate the circle transfer to certain low-dimensional cobordism
categories.  One the one hand, this provides a somewhat novel geometric description of the circle
transfer, and on the other hand it provides a description of the inclusion of the subcategory of
cylinders into the full surface cobordism category.

Let $Y$ be a based space with an action of the circle group $S^1$.  The circle transfer in its
general form is an infinite loop map
\[
\mathrm{trf}: Q \Sigma Y_{hS^1} \to QY,
\]
where $Y_{hS^1} = ES^1 \times_{S^1} Y$ is the homotopy quotient.  An important special case is when
$Y=LX_+$ is the free loop space on some space $X$ plus a disjoint basepoint; the resulting circle
transfer in this case is then
\begin{equation}\label{trf-X}
Q \Sigma (LX_{hS^1})_+ \to QLX_+,
\end{equation}
and this map is central to the construction of topological cyclic homology and the trace map that
relates it with Waldhausen's $A(X)$ and the smooth Whitehead space; see e.g., \cite{BHM} and
\cite{Rognes}.  Taking $Y$ to a be a single point, the circle transfer is a map
$Q\Sigma BS^1_+ \to QS^0$ that has been studied by stable homotopy theorists; in particular
its image in the stable homotopy groups of spheres has been examined by many authors, such as
\cite{Mukai1, Mukai2}, \cite{Imaoka}, \cite{Miller}, and \cite{baker1, baker2}.

\subsection{Statement of the results}

The first result of this note is a cobordism category model for the circle transfer.  Let $\COB(X)$
denote the topological category whose objects are compact oriented 0-manifolds with maps to $X$ and whose
morphism are moduli spaces of 1-dimensional cobordisms with maps to $X$.  Let
$\CIRC(X) \subset \COB(X)$ denote the full subcategory on the single object consisting of the empty
0-manifold.  We will define these categories in detail in section \ref{sec:cobordism-categories}.  The
multiplication map $\mu: S^1 \times S^1 \to S^1$ induces a functor $\mu^*: \CIRC(X) \to \CIRC(LX)$.

\begin{theorem}\label{big-theorem-X}
There is a homotopy commutative diagram,
\[
\begin{diagram}
\node{Q\Sigma (LX_{hS^1})_+} \arrow[2]{e,t}{\mathrm{trf}} \arrow{s,l}{\mbox{\begin{sideways}$\simeq$\end{sideways}}}
\node[2]{QLX_+} \\
\node{|\CIRC(X)|} \arrow{e,t}{\mu^*} \node{|\CIRC(LX)|} \arrow{e,J} \node{|\COB(LX)|.}
\arrow{n,lr}{\mbox{\begin{sideways}$\simeq$\end{sideways}}}{\mathrm{PT}}
\end{diagram}
\] 
\end{theorem}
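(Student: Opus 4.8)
The plan is to identify the two vertical maps as the stated equivalences and then to recognise the bottom composite as the circle transfer. For the left vertical map: $\CIRC(X)$ has a single object, and its topological monoid of endomorphisms is the moduli space of closed oriented $1$-manifolds in $X$. Since $B\Diff^+(S^1)\simeq BS^1$ and the reparametrisation action of $\Diff^+(S^1)\simeq S^1$ on $\Map(S^1,X)=LX$ has homotopy quotient $LX_{hS^1}$, this monoid is $\coprod_{n\ge 0}E\Sigma_n\times_{\Sigma_n}(LX_{hS^1})^n$, the free $E_\infty$-space on $LX_{hS^1}$. By the group-completion theorem (Barratt--Priddy--Quillen with coefficients) its classifying space is the connected delooping of its group completion $Q((LX_{hS^1})_+)$, that is $|\CIRC(X)|\simeq Q\Sigma((LX_{hS^1})_+)$, and likewise $|\CIRC(LX)|\simeq Q\Sigma(((LLX)_{hS^1})_+)$. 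For the right vertical map one invokes Galatius--Madsen--Tillmann--Weiss: since $BSO(1)$ is a point and $MTSO(1)\simeq\mathbb S^{-1}$, the Pontryagin--Thom map is an equivalence $|\COB(Y)|\simeq\Omega^{\infty-1}(MTSO(1)\wedge Y_+)\simeq QY_+$, natural in $Y$; now put $Y=LX$.

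Next I would analyse the bottom composite. The functor $\mu^*$ sends a circle $c\colon S^1\to X$ to the loop $\tilde c\colon S^1\to LX$ with $\tilde c(t)(s)=c(t+s)$; on endomorphism monoids this is the map of free $E_\infty$-spaces induced by the $S^1$-equivariant ``unwinding'' map $LX\to L(LX)$, so on classifying spaces $|\mu^*|$ is the map $Q\Sigma((LX_{hS^1})_+)\to Q\Sigma(((LLX)_{hS^1})_+)$ induced by $LX_{hS^1}\to(LLX)_{hS^1}$. The crucial point is to show that, under the Pontryagin--Thom equivalence, the inclusion $J$ corresponds to $\Omega^\infty$ applied to the parametrised Pontryagin--Thom collapse of the universal circle bundle over the moduli space $(LLX)_{hS^1}$, post-composed with the evaluation map of that bundle to $LX$. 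This is an instance of the general fact that the GMTW scanning construction, applied to a smooth bundle of closed $d$-manifolds, computes the Becker--Gottlieb transfer of the bundle; the single suspension appearing on the source reflects that the vertical tangent bundle of an oriented circle bundle is canonically trivial (the group $SO(2)$ being abelian), so the fibrewise collapse lands in $\Sigma^{-1}\Sigma^\infty_+$ of the total space.

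It then remains to compose with $\mu^*$. By naturality of the Becker--Gottlieb transfer under pullback, the universal circle bundle over $(LLX)_{hS^1}$ pulls back along the unwinding map to the canonical principal $S^1$-bundle $ES^1\times LX\to LX_{hS^1}$, and its evaluation to $LX$ becomes, up to reparametrisation and hence up to homotopy, the projection $ES^1\times LX\to LX$. So the bottom composite is $\Omega^\infty$ of the Becker--Gottlieb transfer of $ES^1\times LX\to LX_{hS^1}$ followed by this projection, which is by definition the circle transfer $\trf$ of the $S^1$-space $LX$; this is the top horizontal map, so the square commutes up to homotopy. Since $\mu^*$ and $J$ are symmetric monoidal functors, every map in the diagram is an infinite loop map, so it suffices throughout to compare underlying maps of spectra.

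The main obstacle is the crucial point of the second paragraph: matching the Pontryagin--Thom map on the circle subcategory with the Becker--Gottlieb transfer of the universal circle bundle, with careful bookkeeping of orientations, of the canonical trivialisation of the vertical tangent line, and of the resulting single suspension. The remaining ingredients --- Barratt--Priddy--Quillen, the GMTW theorem, and the definition of the circle transfer --- enter essentially as black boxes, so the geometric content is concentrated in the compatibility of the cobordism-categorical scanning map with the transfer.
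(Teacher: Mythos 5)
Your proposal is correct and takes essentially the same route as the paper: identify the two vertical maps via Barratt--Priddy--Quillen--Segal and the $d=1$ case of GMTW, identify $\mathrm{PT}\circ J$ on $|\CIRC(LX)|$ with the (dimension-shifting) $S^1$-transfer of the universal circle bundle followed by loop evaluation, and conclude by naturality of the transfer together with the unit identity $\ev\circ\mu^*=\mathrm{id}$. The one difference is that the compatibility you invoke as a general ``scanning computes the transfer'' fact is exactly what the paper proves directly (Proposition \ref{prop:trf-ev}, via an explicit orbit-by-orbit model of the pre-transfer and a comparison of Thom collapses), so your appeal is legitimate but it is precisely where the paper's proof concentrates its work, as you yourself flag.
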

The vertical arrow on the left is given essentially by including a subspace of standard round circles into the space
of all circles. The vertical arrow on the right is a Pontrjagin-Thom collapse
map, and it is a weak equivalence by the 1-dimensional case of the main theorem from \cite{GMTW}.

The second result concerns cobordism categories of surfaces.  Let $\SURF(X)$ denote the category
whose objects are closed oriented 1-manifolds in $\mathbb{R}^\infty$ and whose morphisms are
2-dimensional oriented cobordisms; we will define this in detail below in section
\ref{sec:cobordism-categories}. 

At the level of spectra the circle transfer (for $X=*$) fits into a cofibre sequence
\[
\Sigma^\infty BS^1_+ \stackrel{\trf}{\to}  \Sigma^\infty S^0 \to \Sigma^2 MTSO(2),
\]
where $MTSO(2)$ denotes the Madsen-Tillmann spectrum for $BSO(2)$, i.e., Thom spectrum of the
negative of the tautological 2-plane bundle over $BSO(2)$; see \cite{Madsen-Schlichtkrull}.  There
is a Pontrjagin-Thom map
\[
\mathrm{PT}: |\SURF(X)| \to \Omega^{\infty-1} MTSO(2)\wedge X_+
\] 
that is a weak equivalence by \cite{GMTW}.  Let $\CYL(X)\subset \SURF(X)$ denote the subcategory
with the same objects and only those morphisms consisting of cobordisms that are unions of
cylinders, each with one incoming end and one outgoing end.  Note that the $E_\infty$-structure on
$|\SURF(X)|$ restricts to an $E_\infty$ structure on $|\CYL(X)|$, but this subcategory is not group
complete and so we consider its group completion $\Omega B |\CYL(X)|$.

\begin{theorem}\label{thm2}
There is a homotopy equivalence $Q(LX_{hS^1})_+ \stackrel{\simeq}{\to} \Omega B |\CYL(X)|$, and there is
a homotopy commutative diagram
\[
\begin{diagram}
  \node{\Omega B |\CYL(X)|} \arrow[3]{e} \node[3]{|\SURF(X)|}
  \arrow{s,l}{\mbox{\begin{sideways}$\simeq$\end{sideways}}} \\
  \node{Q(LX_{hS^1})_+}
  \arrow{n,l}{\mbox{\begin{sideways}$\simeq$\end{sideways}}}
  \arrow{e,t}{\mathrm{trf}} \node{\Omega QLX_+} \arrow{e,t}{\mathrm{ev}}
  \node{\Omega QX_+} \arrow{e} \node{\Omega^{\infty-1}(MTSO(2)\Smash
    X_+),}
\end{diagram}
\]
where the lower right horizontal arrow is induced by smashing the cofibre of the circle transfer for
a point with $X_+$ and then applying $\Omega^\infty$.  In particular, when $X$ is a point then the
inclusion $|\CYL(*)| \hookrightarrow |\SURF(*)|$ is null-homotopic.
\end{theorem}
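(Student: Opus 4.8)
\emph{Overview of the plan.} I would prove the three assertions of Theorem~\ref{thm2} in turn: the weak equivalence $Q(LX_{hS^1})_+\simeq\Omega B|\CYL(X)|$; the homotopy commutativity of the diagram; and then the null-homotopy for $X=\ast$, which will be a formal consequence of the diagram together with the circle-transfer cofibre sequence.

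\emph{Step 1: the group completion of $\CYL(X)$.} Since the space of embeddings of a compact manifold into $\R^\infty$ is weakly contractible and $\Diff^+(S^1)\simeq S^1$, the space of embedded oriented circles equipped with a map to $X$ is weakly equivalent to $LX_{hS^1}$; allowing finite disjoint unions, the object space of $\CYL(X)$ is a model for the free $E_\infty$-space $F(LX_{hS^1})=\coprod_n (LX_{hS^1})^n_{h\Sigma_n}$. The crucial point is that a morphism of $\CYL(X)$ --- a disjoint union of cylinders carrying a map to $X$ --- amounts, up to contractible spaces of embedding and parametrisation data, to a bijection of component sets together with a path in $LX_{hS^1}$ for each matched pair; in particular the source map $\Mor\CYL(X)\to\Obj\CYL(X)$ is a weak equivalence, its fibres being based path spaces of $F(LX_{hS^1})$. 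Iterating over the nerve, this identifies $\CYL(X)$, through a zig-zag of topological functors inducing level-wise equivalences of nerves, with the Moore path category of $F(LX_{hS^1})$, whose classifying space is $F(LX_{hS^1})$ itself, compatibly with the $E_\infty$-structures coming from disjoint union. The group-completion theorem (Barratt--Priddy--Quillen--Segal) then gives $\Omega B|\CYL(X)|\simeq\Omega BF(LX_{hS^1})\simeq Q(LX_{hS^1})_+$.

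\emph{Step 2: $\mathrm{PT}$ on cylinders.} Every cylinder $S^1\times I$ carries the canonical ``Lie group'' parallelisation of its tangent bundle, and these parallelisations are compatible with gluing cylinders end to end and with disjoint union; so the inclusion $\CYL(X)\hookrightarrow\SURF(X)$ lifts through the framed surface cobordism category. As $\mathrm{PT}$ is natural in the tangential structure and $MT\mathrm{fr}_2=S^{-2}$, the restriction of $\mathrm{PT}$ to $|\CYL(X)|$ factors through $\Omega^{\infty-1}(S^{-2}\wedge X_+)=\Omega QX_+$, with the second leg the map $\Omega QX_+\to\Omega^{\infty-1}(MTSO(2)\wedge X_+)$ induced by the forgetful map $S^{-2}\to MTSO(2)$, which is the bottom-cell inclusion. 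A connectivity argument identifies this bottom-cell inclusion (suspended and smashed with $X_+$) with the cofibre map of the circle transfer for a point smashed with $X_+$; that is, with the lower right horizontal arrow of the diagram. Thus $\mathrm{PT}$ composed with $\CYL(X)\hookrightarrow\SURF(X)$ agrees with the right-hand part of the bottom row applied to some map $\Omega B|\CYL(X)|\to\Omega QX_+$.

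\emph{Step 3: identifying the factored map with $\mathrm{ev}\circ\mathrm{trf}$.} It remains to show the map $\Omega B|\CYL(X)|\to\Omega QX_+$ produced in Step 2 is $\mathrm{ev}\circ\mathrm{trf}$. A formal input: the composite functor $\CIRC(X)\xrightarrow{\mu^*}\CIRC(LX)\hookrightarrow\COB(LX)\xrightarrow{\mathrm{ev}_*}\COB(X)$ sends a circle $f\colon S^1\to X$, viewed as an endomorphism of $\emptyset$, to the loop $\theta\mapsto(\mu^*f)(\theta)(1)=f(\theta)$, so it is simply the inclusion $\CIRC(X)\hookrightarrow\COB(X)$; hence, by Theorem~\ref{big-theorem-X} together with naturality of $\mathrm{PT}$ along $\mathrm{ev}\colon LX\to X$, the map $\mathrm{ev}\circ\mathrm{trf}$ is, after the identifications $\Omega B|\CYL(X)|\simeq\Omega|\CIRC(X)|$ and $\Omega QX_+\simeq\Omega|\COB(X)|$, the looping of the inclusion $|\CIRC(X)|\hookrightarrow|\COB(X)|$. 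I would then ``unroll the cylinder direction'': a cylinder $S^1\times I\to X$ is a path of circles in $X$, equivalently an interval's worth of $1$-manifold in $LX$ evaluated back to $X$, and this should be organised into a span of topological categories intertwining, after looping and applying $\mathrm{PT}$, the inclusion $\CYL(X)\hookrightarrow\SURF(X)$ with the inclusion $\CIRC(X)\hookrightarrow\COB(X)$ --- the canonical parallelisation of cylinders being exactly what makes the surface-scanning land in the framed (bottom-cell) part. Carrying out this comparison, reconciling the Pontrjagin--Thom scanning of a union of cylinders with the circle-transfer model of Theorem~\ref{big-theorem-X}, is the step I expect to be the main obstacle; the remaining work is bookkeeping with the $E_\infty$-structures and with (de)suspensions.

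\emph{Step 4: the null-homotopy for $X=\ast$.} When $X=\ast$ we have $LX=\ast$, $L\ast_{hS^1}=BS^1$ and $\mathrm{ev}=\mathrm{id}$, so Steps 1--3 exhibit the composite $\Omega B|\CYL(\ast)|\to|\SURF(\ast)|\xrightarrow{\ \simeq\ }\Omega^{\infty-1}MTSO(2)$ as $\Omega^\infty$ of two consecutive maps of the circle-transfer cofibre sequence --- namely $\mathrm{trf}$ followed by the cofibre map --- and is therefore canonically null. Since $|\SURF(\ast)|\simeq\Omega^{\infty-1}MTSO(2)$ is an infinite loop space, hence group complete, the inclusion $|\CYL(\ast)|\hookrightarrow|\SURF(\ast)|$ factors through the group-completion map $|\CYL(\ast)|\to\Omega B|\CYL(\ast)|$; composing with the null map above shows that $|\CYL(\ast)|\hookrightarrow|\SURF(\ast)|$ is null-homotopic.
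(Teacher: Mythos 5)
Your Steps 1 and 4 are sound and essentially match the paper: the equivalence $\Omega B|\CYL(X)|\simeq Q(LX_{hS^1})_+$ is obtained by showing $\Obj_{\CYL(X)}\hookrightarrow|\CYL(X)|$ is an equivalence (the paper does this via Gramain's theorem that $\Diff(S^1\times I)\to\Diff(S^1)$ is an equivalence, which your ``contractible parametrisation data'' claim implicitly needs) and then group-completing the free $E_\infty$-space on $LX_{hS^1}$; and the null-homotopy for $X=*$ is indeed formal once the diagram is in place, since the composite is $\Omega^\infty$ of two consecutive maps in the transfer cofibre sequence and $|\SURF(*)|$ is group-like. The genuine gap is Step 3: you explicitly leave the identification of the factored map $\Omega B|\CYL(X)|\to\Omega QX_+$ with $\ev\circ\trf$ as ``the main obstacle,'' but that identification \emph{is} the content of the theorem; gesturing at ``a span of topological categories'' that would intertwine $\CYL(X)\hookrightarrow\SURF(X)$ with $\CIRC(X)\hookrightarrow\COB(X)$ is a plan, not a proof. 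Step 2 also has an unaddressed issue: morphisms of $\CYL(X)$ are \emph{unparametrised} embedded cylinders, so there is no canonical Lie-group parallelisation, and even after choosing framings their compatibility with composition is obstructed (framings rel boundary form a $\Z$-torsor, the Dehn-twist ambiguity), so the claimed lift through a framed cobordism category needs a genuine argument or a reformulation.

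For comparison, the paper closes exactly this gap by a much more elementary reduction: since $\Obj_{\CYL(X)}\hookrightarrow|\CYL(X)|$ is an equivalence and $\Mor_{\CIRC(X)}\simeq\Omega|\CIRC(X)|\simeq\Obj_{\CYL(X)}$, it suffices to compare the two Pontrjagin--Thom maps on the space of circles itself. There the comparison is immediate and even strict: an object of $\SURF(X)$ is a circle regarded as the germ of a straight cylinder, so classifying its tangent line in $BSO(1)$ and then stabilising along $MTSO(1)\to\Sigma MTSO(2)$ coincides with classifying the tangent plane of the cylinder germ in $BSO(2)$; hence the restriction of the $2$-dimensional $\mathrm{PT}$ map to objects is the $1$-dimensional $\mathrm{PT}$ collapse followed by the bottom-cell/stabilisation map. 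Proposition \ref{prop:trf-ev} (equivalently Theorem \ref{big-theorem-X}) then identifies that $1$-dimensional $\mathrm{PT}$ map with $\ev\circ\trf$, which is precisely the statement you were missing. Your bottom-cell observation (that the cofibre map of the circle transfer is the suspended inclusion $S^{-2}\to MTSO(2)$, smashed with $X_+$) is correct and is the same identification the paper uses via the Madsen--Schlichtkrull cofibre sequence, but without the object-space comparison (or a completed version of your framed-category argument) the proof is incomplete.
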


\subsection{Conventions} Throughout this paper all manifolds are assumed smooth compact and
oriented, and all diffeomorphisms are orientation-preserving.  The geometric realisation of the nerve of a category $\mathscr{C}$ will be denoted
$|\mathscr{C}|$, and we reserve $B$ for deloopings and classifying spaces of monoids..

\section{Construction of the circle transfer}

In this section we briefly recall a geometric construction of the circle transfer.  A more detailed
and general account of such transfer maps is contained in \cite{Madsen-Schlichtkrull} and
\cite{May-equivariant}.  The origin of the circle transfer lies in $S^1$-equivariant homotopy
theory.  Let $Y$ be a based $S^1$-CW complex. There is an equivariant transfer map
\[
 \mathrm{trf}^{S^1}: Q\Sigma Y_{hS^1}
 \stackrel{\simeq}{\longrightarrow} (Q_{S^1} Y)^{S^1}
\]
that is a weak equivalence, and composing this with the map $(Q_{S^1}Y)^{S^1} \to Q Y$ that
includes the fixed points and forgets the circle action yields the circle transfer that we shall be
concerned with.

We will instead use the concrete geometric description that avoids equivariant infinite loop spaces,
following \cite{Madsen-Schlichtkrull}[section 2].  We will restrict attention to the special case
when $Y$ is of the form $Z_+$ for some space $Z$.  First we fix a model for $ES^1$, and for this we
will use the unit sphere $S(\C^\infty)$ inside $\C^\infty = \colim_n \C^n$ with the standard action
of $S^1 \subset \C^*$.  There is an embedding
\begin{equation}\label{es1emb}
j: S(\C^n) \times Z \hookrightarrow \C^n \times \R \times ( S(\C^n) \times_{S^1} Z)
\end{equation}
given by the inclusion $S(\C^n) \hookrightarrow \C^n$, the zero map on $\R$, and the quotient map to
$S(\C^n) \times Z \to S(\C^n)\times_{S^1} Z$.  This embedding is equivariant, where $S^1$ acts as
usual on $S(\C^n)$ and trivially on the other two factors.  Moreover, it has a normal bundle $N(j)$
that can be equivariantly embedded as a tubular neighbourhood.

\begin{lemma}
  The normal bundle $N(j)$ of $S(\C^n) \times Z$ in $\C^n \times \R \times (S(\C^n) \times_{S^1} Z )$
  is equivariantly isomorphic to $S(\C^n) \times Z \times \C^n$.
\end{lemma}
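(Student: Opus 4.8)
The plan is to unwind the definitions and exhibit an explicit equivariant trivialisation. First I would recall that $S(\C^n)\times Z$ sits inside $\C^n\times\R\times(S(\C^n)\times_{S^1}Z)$ via $j$, which is the inclusion $S(\C^n)\hookrightarrow\C^n$ on the first factor, the zero map to $\R$, and the quotient $S(\C^n)\times Z\to S(\C^n)\times_{S^1}Z$ on the last factor. Since the map to the second and third factors is constant in the normal directions coming from $\C^n$, the normal bundle splits as a sum: the normal bundle of $S(\C^n)\hookrightarrow\C^n$, pulled back to $S(\C^n)\times Z$, plus the tangent-along-the-fibre contributions of the $\R$-factor and of the quotient map. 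I would treat these pieces in turn.

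Next I would identify each summand. The normal bundle of the unit sphere $S(\C^n)$ in $\C^n$ is the trivial real line bundle spanned by the outward radial vector field; with the $S^1$-action by scalar multiplication on $\C^n$ this radial direction is fixed, so this contributes a trivial $\R$ with trivial action. The $\R$-factor in the target contributes another trivial $\R$ with trivial action, since $j$ is the zero map there. Finally, the quotient map $S(\C^n)\times Z\to S(\C^n)\times_{S^1}Z$ is a principal $S^1$-bundle (on the $S(\C^n)$ coordinate) and its vertical tangent bundle is the trivial real line bundle spanned by the infinitesimal generator of the $S^1$-action on $S(\C^n)$, on which $S^1$ again acts trivially. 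Adding the two trivial real lines from the radial and generator directions to the $\R$-factor and packaging radial $\oplus$ generator $\oplus\ \R$ together with the remaining $2n-1$ real dimensions is awkward to organise directly; instead I would observe that the radial direction together with the infinitesimal generator direction together span a trivial $\C$-line bundle over $S(\C^n)$ — namely the line $\C\cdot v$ through a point $v\in S(\C^n)$, which is $S^1$-equivariantly isomorphic to $\C$ with the standard action — and then note that the normal bundle of $S(\C^n)$ in $\C^n$ along with the vertical bundle of the quotient assemble with the $\R$ to give exactly $\C^n$ with its standard action, after identifying $T\C^n|_{S(\C^n)}\cong S(\C^n)\times\C^n$ equivariantly and splitting off the tangent bundle of $S(\C^n)/S^1$.

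I expect the main obstacle to be bookkeeping the $S^1$-equivariance carefully while reassembling the summands: the radial and generator directions carry the \emph{standard} action when combined into a complex line, not the trivial action, so one must be careful not to double-count or to claim a trivial summand where there is a standard one. The cleanest route is probably to argue stably at the level of equivariant vector bundles over $S(\C^n)\times Z$: one has the short exact sequences
\[
0\to TS(\C^n)\to T\C^n|_{S(\C^n)}\to N(S(\C^n)\subset\C^n)\to 0,
\qquad
0\to T^{\mathrm{vert}}\to TS(\C^n)\to \pi^* T(S(\C^n)/S^1)\to 0,
\]
all equivariant and all with chosen equivariant splittings (using the standard metric), from which one reads off
\[
N(j)\;\cong\;N(S(\C^n)\subset\C^n)\;\oplus\;\R\;\oplus\;T^{\mathrm{vert}}\;\cong\;\R\oplus\R\oplus\R\oplus\big(T\C^n|_{S(\C^n)}\ominus TS(\C^n)\big)\oplus\cdots
\]
so that, after cancellation, $N(j)\oplus T(S(\C^n)/S^1)\times Z \cong T\C^n|_{S(\C^n)}\times Z\oplus\R$, and since $\C^n$ is a complex vector space the $S^1$-action is the standard one; a dimension count then forces $N(j)\cong S(\C^n)\times Z\times\C^n$ equivariantly once one checks the leftover pieces are equivariantly trivial. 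I would finish by spelling out that the identification $T\C^n|_{S(\C^n)}\cong S(\C^n)\times\C^n$ is $S^1$-equivariant for the diagonal action, and that all the trivial $\R$ summands that appear carry the trivial action, so no twisting survives and the stated isomorphism holds.
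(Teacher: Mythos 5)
Your proposed splitting of the normal bundle is not correct, and the rest of the argument is built on it. You claim $N(j)\cong N\bigl(S(\C^n)\subset\C^n\bigr)\oplus\R\oplus T^{\mathrm{vert}}$; all three summands are real line bundles, so the right-hand side has rank $3$, whereas $N(j)$ has rank $2n$ (as the statement itself asserts, with fibre $\C^n$: the codimension is $(2n+1+(2n-2+\dim Z))-(2n-1+\dim Z)=2n$). The mistake is in how the three target factors contribute. The third component of $j$ is the quotient map $q$, a submersion, so it contributes nothing positively to $\mathrm{coker}(dj)$; instead it removes a direction: the vertical tangent vector of the source (whose $S(\C^n)$-component is $\tau_u$) is killed by $dq$ but is sent by the $\C^n$-component of $dj$ to $\tau_u$, so the orbit direction inside $\C^n$ is tangential, not normal. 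Computing the cokernel directly (for $f=(f_1,f_2)$ with $f_2$ surjective one has $\mathrm{coker}(f)\cong A/f_1(\ker f_2)$) gives $N(j)\cong\bigl(\underline{\C^n}/\R\tau_u\bigr)\oplus\underline{\R}$: besides the radial line and the $\R$-factor this contains the whole rank-$(2n-2)$ horizontal part of $TS(\C^n)$, which your decomposition omits, while your $T^{\mathrm{vert}}$ summand should not appear at all. Consequently the stable manipulation that follows (``$\ominus$'', ``after cancellation'', ``a dimension count then forces'') starts from a false identity; moreover a dimension count cannot force an \emph{equivariant} isomorphism: the action on $S(\C^n)\times Z$ is free, so equivariant bundles are bundles on the quotient, and for $n\ge 2$ the trivial bundle $\C$ with trivial fibre action and with rotation fibre action are non-isomorphic over $S(\C^n)$ (they descend to the trivial and the tautological line on $\C P^{n-1}$). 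Relatedly, the line $\C\cdot v$ you invoke is equivariantly trivial with \emph{trivial} fibre action (trivialise by $w\mapsto wv$), not with the standard one.

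For comparison, the paper sidesteps all of this bookkeeping by exhibiting an explicit equivariant open embedding of $S(\C^n)\times Z\times\C^n$ onto a tubular neighbourhood of $j$, namely $(u,z,v)\mapsto\bigl(u+\tfrac{v}{1+|v|},\ \tfrac{\langle v,\tau_u\rangle}{1+|v|},\ [u,z]\bigr)$, which simultaneously identifies and trivialises $N(j)$. If you want a bundle-theoretic argument instead, the correct route from the cokernel description is: the line $\R\tau\subset\underline{\C^n}$ is equivariantly trivial via $t\mapsto t\tau_u$, so the abstract $\underline{\R}$ summand can be exchanged for $\R\tau$, and the invariant metric splits $\underline{\C^n}=\R\tau\oplus\tau^{\perp}$; hence $N(j)\cong\bigl(\underline{\C^n}/\R\tau\bigr)\oplus\underline{\R}\cong\tau^{\perp}\oplus\R\tau=\underline{\C^n}$ equivariantly, with the fibrewise rotation action --- which is exactly the content of the displayed formula.
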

\begin{proof}
  We consider $\C$ as a real vector space with $S^1$ action.  We give an explicit formula for an
  equivariant embedding of $S(\C^n) \times Z \times \C^n$ into
  $\C^n \times \R \times (S(\C^n) \times_{S^1} Z )$ as a tubular neighbourhood of $j$.  Given a unit
  vector $u \in S(\C^n)$, let $\tau_u$ denote the positive direction unit tangent vector to the
  orbit of $u$ at $u$. The desired formula is
\[
(u, z, v) \mapsto \left( 
u + \frac{v}{1+|v|},  \frac{\langle v, \tau_u\rangle }{(1+|v|)}, [u,z] \right).
\]
Clearly this formula agrees with $j$ on the zero section.  When $v$ is nonzero, we see that this is
an embedding as follows.  Given a triple
$(w,t,\overline{z}) \in \C^n \times \R \times (S(\C^n) \times_{S^1} Z )$, we will reconstruct the unique triple
$(u,z,v)$ mapping to it. First note that $w$ is at most a distance $1/4$ from $u$.  The component
$\overline{z} \in S(\C^n) \times_{S^1} Z $ determines an orbit $O$ in $S(\C^n)$, and there will be exactly two points $u \in O$ for which
$\langle w - u, \tau_u \rangle = t$; moreover, only one of these two points will be sufficiently close to $w$
(with the other being antipodal).  Thus $u$ is uniquely determined, and hence $v/(1+|v|) = w-u$ is
also uniquely determined.  Finally, $u$ and $[u,z]$ together uniquely determine $z \in Z$.
\end{proof}

Applying Thom collapse to the above tubular neighbourhood, one obtains a map
\[
\Sigma^{2n+1} (S(\C^n) \times_{S^1} Z)_+ \to \Sigma^{2n} (S(\C^n)\times Z)_+,
\]
whose adjoint is a map
$\Sigma (S(\C^n) \times_{S^1} Z)_+ \to \Omega^{2n}\Sigma^{2n} (S(\C^n)\times Z)_+.$
Taking the colimit as $n\to\infty$ and using the fact that the sphere $S(\C^\infty)$  is
contractible, we obtain a map of based spaces
\[
\Sigma (Z_{hS^1})_+ \to Q Z_+
\]
that is natural with respect to $S^1$-equivariant maps $Z \to Z'$.
This map is sometimes called the circle transfer; however, we will call
it the \emph{pre-circle transfer} and reserve the term \emph{circle
transfer} for the natural $\Omega^\infty$ extension of the pre-circle
transfer to $Q\Sigma(Z_{hS^1})_+$.

In order to relate the circle transfer to cobordism categories, we will need the following result.

\begin{lemma}\label{normal-restriction}
  Suppose $\pi: Y \to B$ is a fibre bundle with smooth fibres, $V$ is
  a smooth manifold, and $i: Y \to V$ is a map that restricts to an embedding on each fibre. One
  may form the embedding $i\times \pi: Y \hookrightarrow V\times B$,
  with normal bundle denoted $N(i\times \pi)$.  Then for any fibre
  $F$ of $\pi$ there is a natural isomorphism
  \[N(i|_F) \cong N(i\times \pi) |_F.\]
\end{lemma}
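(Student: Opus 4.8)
The statement is essentially a fibrewise-versus-ambient comparison of normal bundles, so the plan is to work locally on the base $B$ and reduce to a product situation. First I would pick a point $b \in B$ with fibre $F = \pi^{-1}(b)$, choose a local trivialisation $\pi^{-1}(U) \cong U \times F$ over an open neighbourhood $U$ of $b$, and restrict everything to this chart; since the claimed isomorphism is natural and the fibre over $b$ sits inside $\pi^{-1}(U)$, it suffices to produce the isomorphism after this restriction. Over $U \times F$ the map $i \times \pi \colon U\times F \to V \times B$ becomes $(u,x)\mapsto (i(u,x), b(u))$ where $b(u)\in U\subset B$, and on the fibre $\{b\}\times F$ it is just $x \mapsto (i(b,x), b)$.

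The key computation is then a short exact sequence (or direct-sum splitting, using a metric) of the relevant tangent and normal bundles along $F$. Pulling back $T(V\times B) = TV \boxplus TB$ to $F$, the differential of $i\times\pi$ sends $T_xF$ into $T_{i(b,x)}V \oplus 0$ (because the $B$-component of $i\times\pi$ is constant on the fibre), and this is exactly $d(i|_F)$ followed by the inclusion of the first summand. Hence the normal bundle $N(i\times\pi)|_F$, defined as the quotient $(TV\boxplus TB)|_F / d(i\times\pi)(TF)$, splits naturally as $\bigl(TV|_F / d(i|_F)(TF)\bigr) \oplus TB|_b$, i.e.\ as $N(i|_F) \oplus (\underline{T_bB})$. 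That extra trivial summand $\underline{T_bB}$ is the normal direction coming from moving the basepoint in $B$, and it is canonically the pullback of $T_bB$.

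Wait --- that is not quite the statement: the lemma asserts $N(i|_F) \cong N(i\times\pi)|_F$ on the nose, with no extra trivial summand. The resolution is that in the ambient embedding $i\times\pi$, the fibre $F$ is \emph{not} mapped to $V\times\{b\}$ but spreads out over $V\times U$ as $b$ varies, so the tangent space to the total space $Y$ along $F$ already contains the ``horizontal'' directions covering $T_bB$; concretely $d(i\times\pi)$ on $T_{(b,x)}(U\times F) = T_bU \oplus T_xF$ hits all of $T_bU\cong T_bB$ in the $TB$-summand. Therefore the correct short exact sequence to write down along $F$ is
\[
0 \to TF \to (TV \boxplus TB)|_F \to N(i\times\pi)|_F \to 0,
\qquad
0 \to TF \to TV|_F \to N(i|_F) \to 0,
\]
and the map $TV|_F \to (TV\boxplus TB)|_F$ induced by $i\mapsto i\times\pi$ together with $d\pi$ surjecting onto $TB|_F$ shows the two quotients agree. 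I would make this precise by choosing a splitting $TY|_F \cong TF \oplus \pi^*T_bB$ coming from an Ehresmann connection on $\pi$, checking that $d(i\times\pi)$ carries the $\pi^*T_bB$ summand isomorphically onto the $TB$-factor of $(TV\boxplus TB)|_F$, and concluding that the complementary quotients --- $N(i\times\pi)|_F$ and $N(i|_F)$ --- are identified. Naturality follows because every choice (connection, splitting) only enters through a contractible space of options, or can be sidestepped by phrasing the argument purely with exact sequences.

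\textbf{Main obstacle.} The only real subtlety is bookkeeping of which directions are ``used up'' by the embedding: one must be careful that the $TB$-directions in the ambient $V\times B$ are exactly matched by the horizontal directions in $TY$ that $i\times\pi$ sees, so that nothing spurious survives into $N(i\times\pi)|_F$. Once the diagram of short exact sequences is set up correctly this is immediate, but getting the indexing of tangent spaces right --- distinguishing $T_{(b,x)}Y$ from $T_xF$, and the role of the local trivialisation --- is where care is needed; everything else is a routine diagram chase.
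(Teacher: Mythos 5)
Your proposal is correct and takes essentially the same route as the paper: localize over the base using a trivialisation (the paper reduces to the trivial bundle $Y = F\times B$) and then identify the normal bundle fibrewise, with your exact-sequence computation simply spelling out what the paper dismisses as immediate. The false start (the spurious extra $T_bB$ summand, coming from restricting the differential to $TF$ instead of $TY|_F$) is caught and corrected within your own argument, so the final version is sound.
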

\begin{proof}
  Since the statement is local to a single fibre and the fibre bundle
  is locally trivial it suffices to consider the case of a trivial
  bundle $Y = F \times B$.  In this case one immediately sees that the
  normal bundle of $i \times \pi :F\times B \hookrightarrow V\times
  B$, when restricted to $F\times \{b\}$ is precisely the normal
  bundle of $i|_{F\times\{b\}}: F\times \{b\} \hookrightarrow V$.
\end{proof}

When the fibre bundle is $\pi: S(\C^n) \times Z \to S(\C^n) \times_{S^1} Z$ and the map $i$ is the composition 
\[
i: S(\C^n) \times Z \stackrel{\mathrm{proj}}{\longrightarrow} S(\C^n)
\stackrel{\mathrm{id}\times 0}{\hookrightarrow} \C^n\times \R,
\] 
we may use the identification of Lemma \ref{normal-restriction} to
construct the pre-circle transfer one orbit at a time.

\begin{lemma}\label{orbit-trf}
  The pre-circle transfer sends an orbit $O \subset ES^1 \times Z$ to the point
  of $\Omega Q Z_+$ obtained by Pontrjagin--Thom collapse applied to
  the diagram
\[
\C^n \times Z \leftarrow N(i\times \pi)|_{O} \cong
N(i|_{O}) \hookrightarrow \C^n \times \R.
\]
where the right arrow is an open inclusion as a tubular neighbourhood, and the left arrow is a
vector bundle morphisms that is an isomorphism on each fibre (it covers the projection $S(\C^n) \times Z \to Z$).
\end{lemma}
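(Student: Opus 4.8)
The plan is to unwind the construction of the pre-circle transfer from the previous paragraphs and match it up, orbit by orbit, with the Pontrjagin--Thom data described in the statement. First I would recall that the pre-circle transfer was obtained by applying Thom collapse to the tubular neighbourhood of $j: S(\C^n)\times Z \hookrightarrow \C^n\times\R\times(S(\C^n)\times_{S^1}Z)$, whose normal bundle is $S(\C^n)\times Z\times\C^n$ by the first Lemma, and then taking an adjoint and a colimit. The key observation is that $j = i\times\pi$ for the specific $i$ and $\pi$ introduced just before the statement: the first two coordinates of $j$ are exactly $i: S(\C^n)\times Z \to \C^n\times\R$ (projection to $S(\C^n)$ followed by $\mathrm{id}\times 0$), and the last coordinate is exactly $\pi: S(\C^n)\times Z \to S(\C^n)\times_{S^1}Z$. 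So Lemma \ref{normal-restriction} applies verbatim and gives, for each orbit $O\subset S(\C^n)\times Z$ (equivalently each point $\bar{z}\in S(\C^n)\times_{S^1}Z$), a natural identification $N(i|_O)\cong N(i\times\pi)|_O$.

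Next I would trace what the Thom collapse map $\Sigma^{2n+1}(S(\C^n)\times_{S^1}Z)_+ \to \Sigma^{2n}(S(\C^n)\times Z)_+$ does fibrewise over the point $\bar z$. The source decomposes, via the adjunction used to pass to $\Sigma(S(\C^n)\times_{S^1}Z)_+ \to \Omega^{2n}\Sigma^{2n}(S(\C^n)\times Z)_+$, into a family of maps $S^{2n+1} \to S^{2n}\wedge(S(\C^n)\times Z)_+$ indexed by $S(\C^n)\times_{S^1}Z$; the map indexed by $\bar z$ is precisely the stable map associated to the Pontrjagin--Thom construction on the embedded normal bundle of the single orbit $O$ over $\bar z$, sitting inside $\C^n\times\R$ via $i|_O$, with Thom data into $\C^n\times Z$ via the bundle map covering the projection $S(\C^n)\times Z \to Z$. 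Identifying $\R\subset\C^n\times\R$ as one coordinate of the sphere and using the identification of Lemma \ref{normal-restriction}, this is exactly the zig-zag displayed in the statement. Passing to the colimit $n\to\infty$ and using contractibility of $S(\C^\infty)$ replaces $S(\C^n)\times_{S^1}Z$ by $Z_{hS^1}$ and $\C^n\times Z$ by the stabilised target, so that the orbit $O$ — now a point of $ES^1\times_{S^1}Z$ — maps to the asserted point of $\Omega QZ_+$.

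The main obstacle I anticipate is purely bookkeeping: keeping the adjunctions, suspension coordinates, and the colimit stabilisation aligned so that ``the map on the fibre over $\bar z$'' is literally the Pontrjagin--Thom collapse of the orbit's normal bundle, with no stray shift or orientation sign. Concretely, one must check that the collapse map for the big tubular neighbourhood of $j$, when restricted over $\bar z\in S(\C^n)\times_{S^1}Z$, factors through the collapse for the sub-tubular-neighbourhood $N(i|_O)\subset\C^n\times\R$ sitting over that single point; this is where Lemma \ref{normal-restriction} does the real work, since it is exactly the statement that the ambient normal bundle restricts to the fibrewise one. Once that compatibility is in hand, the remaining identifications (adjoint, colimit, the bundle map covering $S(\C^n)\times Z\to Z$ being an isomorphism on fibres) are routine, and the lemma follows.
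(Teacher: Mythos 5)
Your proposal is correct and follows exactly the route the paper intends: the paper offers no separate proof of this lemma, treating it as immediate from the observation that $j = i\times\pi$ and the identification of Lemma \ref{normal-restriction} (``we may use the identification of Lemma \ref{normal-restriction} to construct the pre-circle transfer one orbit at a time''), which is precisely the unwinding you carry out. Your fibrewise tracing of the Thom collapse over a point of $S(\C^n)\times_{S^1}Z$ and the bookkeeping of adjoints and stabilisation supply the details the paper leaves implicit, with no gaps.
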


\section{Cobordism categories}
\label{sec:cobordism-categories}

We now define the cobordism categories $\mathscr{C}_d(X)$ of $(d-1)$-manifolds and $d$-dimensional
cobordisms, essentially following the construction of \cite{Madsen-Tillmann}[\S 2.1] or
\cite{GMTW}[\S 2.1];  $\mathscr{C}_d$ will be a topological category, meaning that it consists of a
space of objects, a space of morphisms, and continuous maps defining source, target and compositions
of arrows.

As a set, the objects $\Obj_{\mathscr{C}_d}$ will be the set of quadruples  $(M, f, \phi, a)$ where 
\begin{itemize}
\item $M$ is a smooth closed oriented $(d-1)$-dimensional submanifold of $\R^n$ for some $n$; 
\item $f: M \to X$ is a continuous map;
\item $\phi$ is an embedding of the normal bundle of $M$ as a tubular neighbourhood; 
\item $a\in \R$. 
\end{itemize}
The above data is considered up to the equivalence generated by the stabilisations $\R^n
\hookrightarrow \R^{n+1}$.

In order to define the topology, first fix a manifold $M$ and a rank $(n-d+1)$ vector bundle
$\mathcal{N}$ over it.  There is a map
$\Emb(\mathcal{N},\R^n) \to \Emb(\mathcal{N}\oplus \R, \R^{n+1})$ given by choosing a bounded
embedding of $\R \to \R$, such as $f(x) = x/(1+|x|)$ and using this to extend each embedding of
$\mathcal{N}$ to $\mathcal{N}\oplus \R$.  Let $\Aut(\mathcal{N})$ denote the group of bundle
automorphism of $\mathcal{N}$ that cover a diffeomorphism of $M$.  We give $\Emb(\mathcal{N},\R^n)$
and $\Aut(\mathcal{N}$ the $\C^\infty$ Frechet topology.  The space of objects is then
\[
\left( \colim_{n\to \infty} \coprod_{(M^{d-1},\mathcal{N}^{n-d+1})} \Emb(\mathcal{N},\R^n) /
  \Aut(\mathcal{N}) \right) \times \R,
\]
where the disjoint union runs over all isomorphism classes of pairs.

The set of morphisms $\Mor_{\mathscr{C}_d}$ consists the identity morphisms disjoint union with the set of
all tuples $(W,f,\phi,a_1,a_2)$ where
\begin{itemize}
\item $W$ is a $d$-dimensional submanifold, with boundary, embedded in $[a_1,a_2] \times \R^\infty$ such that
  on a neighbourhood of each endpoint $a_i$ of the interval it is the product of a
  $(d-1)$-submanifold $\partial_i W \subset \R^\infty$
  with an interval; i.e., there exists $\epsilon
  > 0$ such that $W\cap \left( [a_1, a_1+\epsilon) \times \R^\infty \right)= [a_1, a_1+\epsilon)
  \times \partial_1 W$, and likewise near $a_2$;
\item $f: W \to X$ is a continuous map; 
\item $\phi$ is an embedding of the normal bundle of $W$ as a tubular neighbourhood such that the
  fibres over the boundary are embedded over $\{a_1\}$ and $\{a_2\}$, and such that $\phi$ is
  independent of $t\in [a_1,a_2]$ for $t$ sufficiently close to the endpoints;
\item $a_1$ and $a_2$ are real numbers satisfying $a_1 < a_2$
\end{itemize}
For fixed $a_1$ and $a_2$, as for the objects, we topologise this as we did for the objects, as a
colimit of normal bundle embeddings modulo bundle automorphisms.  Linear scaling identifies the
resulting space for any pair $a_1 < a_2$ with the space for any other pair $a_1' < a_2'$, and thus
we topologise the full space of non-identity morphisms as a bundle over the open half-plane of all
pairs $a_1 < a_2$.  The source and target maps are given by restricting to either end of the
interval.

\section{The Pontrjagin-Thom map}

Now we briefly recall the Pontrjagin-Thom construction from \cite{Madsen-Tillmann}[\S 2.1] yields,
up to homotopy, a map
\[
\mathrm{PT}: |\mathscr{C}_d(X)| \to \Omega^{\infty-1}MTSO(d)\wedge X_+ 
\]
that is a weak homotopy equivalence by the main theorem of \cite{GMTW}.

First we require an appropriate model for the target, and for this we use the Moore path category.  For
a space $Y$, let $\mathcal{P}(Y)$ denote the topological category with:
\begin{itemize}
\item $\Obj_{\mathcal{P}(Y)} = Y$,
\item $\Mor_{\mathcal{P}(Y)} = \bigcup_{t\in [0,\infty)}
  \Map([0,t],Y)$, topologised compatibly with the topology of
  $[0,\infty)$.
\end{itemize}
The source and target maps are given by restricting to either
endpoint, and composition of morphisms is concatenation of paths.
Identity morphisms are the paths of length zero.

\begin{proposition}\label{path-prop}
There is a natural weak equivalence $Y \stackrel{\simeq}{\to} |\mathcal{P}(Y)|$.
\end{proposition}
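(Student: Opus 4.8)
The plan is to show that the Moore path category $\mathcal{P}(Y)$ has a contractible space of objects worth of ``extra'' data and that its nerve collapses onto $Y$. More precisely, I would exhibit a natural map $|\mathcal{P}(Y)| \to Y$ and a natural map $Y \to |\mathcal{P}(Y)|$ and check that both composites are naturally homotopic to the identity. The evaluation-at-the-source map $\Mor_{\mathcal{P}(Y)} = \bigcup_{t} \Map([0,t],Y) \to Y$, $\gamma \mapsto \gamma(0)$, together with the identity on objects, is compatible with source, target, and composition (composition is concatenation, and the source of a concatenation is the source of the first path), so it defines a functor $\mathcal{P}(Y) \to \mathcal{P}_0(Y)$, where $\mathcal{P}_0(Y)$ is the category with object space $Y$ and only identity morphisms; realising the nerve gives $\ev_0 \colon |\mathcal{P}(Y)| \to |\mathcal{P}_0(Y)| = Y$. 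In the other direction, the inclusion of objects as length-zero morphisms realises to the map $Y = |\mathcal{P}_0(Y)| \to |\mathcal{P}(Y)|$, and one composite $Y \to |\mathcal{P}(Y)| \to Y$ is literally the identity.

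The remaining and only substantive point is that the other composite $|\mathcal{P}(Y)| \to Y \to |\mathcal{P}(Y)|$ is homotopic to the identity. For this I would use the standard trick available for the Moore path category: there is a natural transformation from the identity functor on $\mathcal{P}(Y)$ toward the ``collapse to source'' functor, implemented by the family of reparametrisations $\gamma \mapsto \gamma(\min(s\cdot(-), \mathrm{length}(\gamma)))$ for $s \in [0,1]$, which at $s=1$ is the identity and at $s=0$ is the constant path at the source. Actually it is cleaner to phrase this at the level of simplices: a $k$-simplex of the nerve is a composable string of Moore paths $\gamma_1, \dots, \gamma_k$, and scaling all the lengths simultaneously by $s \in [0,1]$ gives a homotopy within $|\mathcal{P}(Y)|$ (using the Moore, rather than fixed-interval, nature of the morphisms to make concatenation strictly associative and the scaling continuous) from the given point to a point in the image of $Y$. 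Feeding this family of simplicial maps through geometric realisation produces the required homotopy $\mathrm{id}_{|\mathcal{P}(Y)|} \simeq (\text{incl}\circ \ev_0)$, and naturality in $Y$ is evident since every construction used is functorial in $Y$.

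The main obstacle, such as it is, is purely one of bookkeeping: one must make sure the length-rescaling homotopy is continuous and simplicial, i.e.\ commutes with the face and degeneracy maps of the nerve, so that it descends to a well-defined homotopy on the realisation; the Moore convention (allowing paths of arbitrary length rather than reparametrising onto $[0,1]$) is exactly what makes the degeneracy maps (insertion of length-zero paths) and the face maps (composition = concatenation) strictly compatible with the scaling. I would also remark that this is a folklore fact and cite, e.g., \cite{Madsen-Tillmann} for the same argument; the weak equivalence is in fact a deformation retract of the realisation onto the subspace of objects. No point-set pathology intervenes because $Y$ is arbitrary and all maps are continuous by construction of the topology on $\Mor_{\mathcal{P}(Y)}$.
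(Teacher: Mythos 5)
Your argument has a genuine gap, in fact two related ones, both at the point you dismiss as ``bookkeeping''. First, ``evaluation at the source'' does \emph{not} define a functor $\mathcal{P}(Y)\to\mathcal{P}_0(Y)$: a functor that is the identity on objects must send a morphism $\gamma\colon x\to y$ to a morphism $x\to y$ of $\mathcal{P}_0(Y)$, and the discrete category has no such morphism when $x\neq y$; sending $\gamma$ to $\mathrm{id}_x$ violates compatibility with the target map. So the map $\ev_0\colon|\mathcal{P}(Y)|\to Y$ is never actually constructed, and with it the whole two-sided-homotopy-inverse strategy collapses. Second, the length-rescaling ``natural transformation'' does not exist either, for the same underlying reason: a homotopy of Moore paths that shrinks the length to zero must either keep the endpoints fixed --- in which case a non-constant path cannot converge to a length-zero (hence constant) path, so the family is discontinuous at $s=0$ --- or move the endpoint toward the source (truncation $\gamma\mapsto\gamma|_{[0,s\ell]}$), in which case the intermediate stages change the targets of morphisms while you keep the objects fixed, so they are not endofunctors and the operation does not commute with the face maps (concatenation) of the nerve. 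Either way the claimed simplicial homotopy from $\mathrm{id}_{|\mathcal{P}(Y)|}$ to $\mathrm{incl}\circ\ev_0$ is not well defined, and the assertion that $Y$ is a deformation retract of $|\mathcal{P}(Y)|$ is not established by your argument.

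The paper's proof sidesteps exactly this obstruction. It only uses the easy direction you do have, namely the inclusion of $Y$ (viewed as a constant simplicial space) into $N_\bullet\mathcal{P}(Y)$ by constant paths of length zero, and then argues \emph{degreewise}: in each simplicial degree $k$, a composable string of Moore paths deformation retracts onto the constant length-zero string at its source (truncation is perfectly continuous here, and no compatibility between different $k$ or with the face maps is required), so the inclusion is a levelwise homotopy equivalence of simplicial spaces and therefore induces a weak equivalence of geometric realisations. If you want to salvage your approach, you must either produce an honest retraction $|\mathcal{P}(Y)|\to Y$ by hand (defined on realisations, not via a functor, and checked against faces and degeneracies), or abandon the explicit homotopy inverse and invoke the levelwise criterion as the paper does.
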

\begin{proof}
  Considering $Y$ as a constant simplicial space, the simplicial map
  $Y_\bullet \to N_\bullet \mathcal{P}(Y)$, which in degree $n$ sends
  $y$ to the constant path at $y$ of length zero, is clearly a homotopy
  equivalence degree-wise.  Hence it induces a homotopy equivalence of
  geometric realisations.
\end{proof}

We will now produce a map $|\mathscr{C}_d(X)| \to |\mathcal{P}(\Omega^{\infty-1}MTSO(d)\wedge X_+)|$
by defining a continuous functor between these categories.  First consider an object
$Z = (M,f,\phi,a) \in \Obj_{\mathscr{C}_d(X)}$.  It is represented by a $(d-1)$ manifold $M$ embedded
in $\R^n\times \{a\}$ with a tubular neighbourhood $N_M \hookrightarrow \R^n\times \{a\}$ and a map $f$
to $X$; we think of this as the germ of a $d$-manifold with tubular neighbourhood
$M\times (a-\epsilon, a+\epsilon)$ in $\R^n \times (a-\epsilon, a+\epsilon)$.  Let $Gr(d,n)$ denote
the Grassmannian of oriented $d$-planes in $\R^n$, and let $\gamma_{d,n}$ denote the tautological
$d$-plane bundle over it.  The tangent bundle of $M$ is classified by a map $M \to Gr(d,n+1)$. that
factors through the stabilization map $Gr(d-1,n) \to Gr(d,n+1)$.  Taking Thom spaces yields a map
\[
\mathrm{Th}(N_M) \to \mathrm{Th}(\gamma_{d,n+1}^\perp)\wedge X_+,
\]
where the map to the factor $X$ is given by the bundle projection $N_M \to M$ followed by $f$.  Collapsing the
complement of the tubular neighbourhood yields a map $S^n \to \mathrm{Th}(N_M)$ and so the
composition is a point in $\Omega^n \mathrm{Th}(\gamma_{d,n+1}^\perp) \wedge X_+$, and hence letting $n$ go to
infinity one obtains a continuous map
\[
\Obj_{\mathscr{C}_d} \to \Omega^{\infty-1} MTSO(d) \wedge X_+.
\]

Now consider a morphism; it is represented by a $d$-manifold $W$ embedded
in $\R^n \times [a_1, a_2]$ with boundary lying over the endpoints, and equipped with a tubular
neighbourhood and a map to $X$.  The Pontrjagin-Thom collapse produces a path of length $a_2 - a_1$ in the space
$\Omega^n \mathrm{Th}(\gamma_{d,n+1}^\perp)\wedge X_+$, and hence one obtains a continuous map
\[
\Mor_{\mathscr{C}_d} \to \Mor_{\mathcal{P}(\Omega^{\infty-1}MTSO(d) \wedge X_+)}
\] 
that is compatible with source, target, and compositions.

Note that in the special case of $d=1$, the space $BSO(1)$ is contractible, and so there is an
equivalence $\Omega^{\infty -1} MTSO(1) \wedge X_+ \simeq QX_+$.

\section{The subcategory of circles}

We now consider the full subcategory $\CIRC(X) \subset \COB(X)$ consisting of cobordisms from the
empty 0-manifold to itself.  Morphisms in this subcategory are disjoint unions of circles with
tubular neighbourhoods in $[a_1,a_2]\times \R^\infty$.  It is thus straightforward to see that the
space of morphism in $\CIRC(X)$ has the homotopy type
\[
\coprod_k E\Sigma_k \times_{\Sigma_k} (LX_{hS^1}).
\]
since the diffeomorphism group of a union of $k$ circles is homotopy equivalent to the wreath
product of $\Sigma_k$ with $(S^1)^k$.  Moreover, composition of morphisms corresponds to disjoint
union.  By thinking of a point of the homotopy quotient $LX_{hS^1}$ as an unparametrized circle in
$\R^\infty$ with a map to $X$, one sees that there is a map
\[
LX_{hS^1} \to \Mor_{\CIRC(X)}.
\]

\begin{proposition}
The above map induces a weak equivalence $Q\Sigma (LX_{hS^1})_+ \to |\CIRC(X)|.$
\end{proposition}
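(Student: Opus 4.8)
The plan is to recognise $|\CIRC(X)|$ as the classifying space of the topological monoid $\Mor_{\CIRC(X)}$ under composition, to identify that monoid with the free $E_\infty$-monoid on $LX_{hS^1}$, and then to invoke the Barratt-Priddy-Quillen theorem together with the elementary fact that the bar construction on a monoid is always connected. The first point is easy: every object of $\CIRC(X)$ is the empty $0$-manifold, so any two morphisms are composable, and composing them simply places their cobordisms in adjacent slabs $[a_0,a_1]\times\R^\infty$ and $[a_1,a_2]\times\R^\infty$ glued along the empty boundary, so concatenation is strictly associative. Hence $\CIRC(X)$ is a one-object topological monoid and $|\CIRC(X)| = B\Mor_{\CIRC(X)}$. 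Moreover concatenation is homotopy-commutative, with the disjoint-union operation on cobordisms -- realised by pushing things apart into the infinitely many free directions of $\R^\infty$ -- furnishing the higher coherences of an $E_\infty$-structure on $\Mor_{\CIRC(X)}$.

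Next I would pin down the homotopy type of $\Mor_{\CIRC(X)}$ as an $E_\infty$-monoid. On the path component consisting of unions of $k$ circles, the standard argument that a space of embedded submanifolds-with-tubular-neighbourhoods models the corresponding Borel construction (as in \cite{Madsen-Tillmann} and \cite{GMTW}; the parameters $a_1<a_2$ and the choice of tubular neighbourhood contribute contractible factors in the colimit over the ambient dimension) identifies this component, up to weak equivalence, with $E\Diff^+(\sqcup_k S^1)\times_{\Diff^+(\sqcup_k S^1)}\Map(\sqcup_k S^1,X)$. Since $\Diff^+(S^1)$ deformation retracts onto $SO(2)=S^1$, we have $\Diff^+(\sqcup_k S^1)\simeq \Sigma_k\wr S^1$, and as $\Map(\sqcup_k S^1,X)=(LX)^k$ this component is weakly equivalent to $E\Sigma_k\times_{\Sigma_k}(LX_{hS^1})^k$. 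Letting $k$ vary and checking that concatenation corresponds to the evident block-sum maps of these Borel constructions, one obtains a weak equivalence of $E_\infty$-monoids $\coprod_k E\Sigma_k\times_{\Sigma_k}(LX_{hS^1})^k \stackrel{\simeq}{\longrightarrow} \Mor_{\CIRC(X)}$ under which the map $LX_{hS^1}\to\Mor_{\CIRC(X)}$ of the statement is the inclusion of the $k=1$ summand -- i.e.\ the inclusion of the free generators. In other words $\Mor_{\CIRC(X)}$ is the free $E_\infty$-monoid on $LX_{hS^1}$.

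It remains to compute the classifying space of this free $E_\infty$-monoid. By the Barratt-Priddy-Quillen theorem the group completion of the free $E_\infty$-monoid on a space $Y$ is $QY_+ = \Omega^\infty\Sigma^\infty Y_+$. But $B$ of any topological monoid is path-connected -- it is the realisation of a simplicial space whose space of $0$-simplices is a point -- hence already grouplike, hence (being also $E_\infty$) an infinite loop space; since its loop space is $QY_+$ it must be $\Omega^{\infty-1}\Sigma^\infty Y_+ = Q\Sigma Y_+$. Taking $Y = LX_{hS^1}$ gives $|\CIRC(X)| = B\Mor_{\CIRC(X)}\simeq Q\Sigma(LX_{hS^1})_+$, and unwinding the identifications shows the resulting equivalence is the map of the statement (which is the $\Omega^\infty$-extension of the adjoint of the canonical map $LX_{hS^1}\to\Mor_{\CIRC(X)}\to\Omega|\CIRC(X)|$). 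The main obstacle is the middle step: the homotopy type of a single component is routine submanifold-space bookkeeping and the identifications $\Diff^+(S^1)\simeq S^1$ and $\Diff^+(\sqcup_k S^1)\simeq\Sigma_k\wr S^1$ are standard, but one must take care that all of these equivalences are compatible with the multiplicative ($E_\infty$) structure -- so that applying $B$ is legitimate -- and that the tubular-neighbourhood data genuinely washes out in the colimit.
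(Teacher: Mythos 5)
Your proposal is correct and is essentially the paper's own argument: identify the morphism space $\Mor_{\CIRC(X)}$ (up to the contractible parameter and tubular-neighbourhood data) with the free $E_\infty$-monoid $\coprod_k E\Sigma_k\times_{\Sigma_k}(LX_{hS^1})^k$ with composition given by disjoint union, and then invoke the Barratt--Priddy--Quillen--Segal theorem to recognise the one extra (nerve) delooping as $Q\Sigma(LX_{hS^1})_+$, which is exactly what the paper's one-line proof appeals to. The only caveat is that $\CIRC(X)$ is not literally a one-object monoid --- its object space is $\R$ and composition requires matching endpoints --- but since that space is contractible (and linear rescaling identifies the morphism spaces over different intervals) the identification of $|\CIRC(X)|$ with the classifying space of the disjoint-union monoid goes through as you intend.
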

\begin{proof}
This follows from the Barrat-Priddy-Quillen-Segal Theorem.
\end{proof}

The multiplication map $\mu: S^1 \times S^1 \to S^1$ induces a continuous functor $\mu^*: \CIRC(X) \to
\CIRC(LX)$.  Note that this functor is does not extend to $\COB(LX)$.

\section{The circle transfer as a collapse map}

\begin{proposition}\label{prop:trf-ev}
Given any space $X$, there is a homotopy commutative diagram 
\[
\begin{diagram}
\node{|\CIRC(X)|} \arrow{e,J} \node{|\COB(X)|} \arrow{s,lr}{\mbox{\begin{sideways}$\simeq$\end{sideways}}}{\mathrm{PT}} \\
\node{Q\Sigma (LX_{hS^1})_+}\arrow{n,l}{\mbox{\begin{sideways}$\simeq$\end{sideways}}} \arrow{e,t}{\trf \:\circ \:\ev} \node{QX_+.}
\end{diagram}
\]
\end{proposition}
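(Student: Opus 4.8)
The plan is to compare the two composites by tracking what happens to a single orbit, using Lemma \ref{orbit-trf} as the bridge between the analytic description of the pre-circle transfer and the Pontrjagin--Thom functor of section 4. First I would recall that by the Barratt--Priddy--Quillen--Segal proposition the left vertical map identifies $Q\Sigma(LX_{hS^1})_+$ with $|\CIRC(X)|$, so on $\pi_0$-generators it suffices to understand where an orbit $O \subset ES^1 \times LX$ goes under each route. Going down-then-across, the orbit is first interpreted (via the map $LX_{hS^1}\to \Mor_{\CIRC(X)}$) as an unparametrised circle in $\R^\infty$ equipped with a map to $X$; composing with the inclusion $\CIRC(X)\hookrightarrow \COB(X)$ and then with $\mathrm{PT}$, this circle-with-map is collapsed. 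Going across-then-down, Lemma \ref{orbit-trf} tells us $\trf$ sends $O$ to the point of $\Omega QLX_+$ produced by Pontrjagin--Thom collapse of the diagram $\C^n \times LX \leftarrow N(i\times\pi)|_O \cong N(i|_O)\hookrightarrow \C^n\times\R$, after which $\ev\colon \Omega QLX_+ \to \Omega QX_+$ is applied by evaluating the loop in $LX$ at the basepoint, i.e.\ composing with $\ev\colon LX\to X$.

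The key step is then to identify these two collapse data. The circle $O$, viewed as a submanifold of $\R^\infty$ together with its embedded tubular neighbourhood, is exactly the image of $i|_O$ together with the normal bundle $N(i|_O)$; the $d=1$ Pontrjagin--Thom functor of section 4 applied to this morphism produces, after the identification $\Omega^{\infty-1}MTSO(1)\wedge X_+ \simeq QX_+$ (since $BSO(1)$ is contractible, so the Thom-space factor is trivial and only the bundle projection to $X$ survives), precisely the collapse map on the Thom space of $N(i|_O)$ covering the map $O\to LX \xrightarrow{\ev} X$. That is the same collapse appearing in Lemma \ref{orbit-trf} after post-composing with $\ev$, because $\ev$ on the loop space corresponds on the orbit level to composing the reference map $O\to LX$ with evaluation at the basepoint. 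So the two ways around the square agree on each orbit, up to the canonical homotopies built into the identifications.

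To turn this orbit-wise comparison into an honest homotopy-commuting square of infinite loop maps, I would assemble the orbit-level data continuously: both composites are built from natural Pontrjagin--Thom constructions applied to the universal family of circles-with-maps over $\Mor_{\CIRC(X)}$, so the identifications $N(i\times\pi)|_O \cong N(i|_O)$ of Lemma \ref{normal-restriction}, the contractibility of $S(\C^\infty)$, and the stabilisation colimits all glue into a single homotopy between the two functors, hence a homotopy of the realised maps. The main obstacle I anticipate is bookkeeping rather than conceptual: keeping careful track of the direction of the interval parameter and the loop coordinate (the ``$\Omega$'' in $\Omega QLX_+$ versus the interval $[a_1,a_2]$ of a cobordism in $\COB(X)$), and verifying that the adjunctions used to pass between $\Sigma(LX_{hS^1})_+ \to QLX_+$ and its $\Omega^\infty$-extension are compatible with the Moore-path model for $|\COB(X)|$ used in section 4. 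Once those identifications are pinned down, the homotopy commutativity is forced.
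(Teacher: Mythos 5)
Your proposal is correct and follows essentially the same route as the paper's proof: both compare the two composites orbit-by-orbit via Lemma \ref{orbit-trf}, observe that loop evaluation replaces the $LX$-coordinate by its value $\alpha(t)\in X$, and identify the cobordism-category Pontrjagin--Thom map on a circle-with-map as literally the same Thom collapse of $N(i|_O)\hookrightarrow \C^n\times\R$. The paper packages your ``assemble continuously'' step by noting that both composites are adjoints of stable maps represented by finite-level maps $\Sigma^{2n+1}(S(\C^n)\times_{S^1}LX)_+\to\Sigma^{2n}X_+$, which is the same bookkeeping you anticipate.
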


\begin{proof}
  First note that a point of $LX_{hS^1} = S(\C^\infty) \times_{S^1} LX$ consist of a pair
  $(O,\alpha)$ where $O\subset S(\C^\infty)$ is a circle orbit and $\alpha: O \to X$ is a continuous
  map; the map  $S(\C^\infty)_\times LX \to S(\C^\infty) \times_{S^1} LX$ is a principal $S^1$-bundle and a
  point of the total space $S(\C^\infty) \times LX$ is represented by a triple $(O,t,\alpha)$ where
  $t$ is a point on $O$.

  Both directions around the diagram are the adjoints of maps
  $\Sigma^{\infty+1} (S(\C^\infty) \times_{S^1} LX)_+ \to \Sigma^\infty X_+$, which are in
  represented by families of maps
  \[\Sigma^{2n+1} (S(\C^n) \times_{S^1} LX)_+ \to \Sigma^{2n} X_+,\] so we need only check that these are
  homotopic. Consider a point $p$ of $\Sigma^{2n+1} (S(\C^n) \times_{S^1} LX)_+$; it corresponds to a
  triple $(v,O,\alpha)$, where $v \in \C^n \times \R$.  Using Lemma \ref{orbit-trf}, we see that the
  pre-transfer sends this to the basepoint if $v$ is outside the tubular neighbourhood of $O$, and
  if it is in the tubular neighbourhood we get a point of $\C^n$ (since the normal bundle is
  trivialised with fibre $\C^n$) and a point $t\in O$ recording which fibre $v$ is in. Composing
  with the loop evaluation map then replaces $\alpha \in \Map(O,X)\cong LX$ with $\alpha(t) \in X$.

On the other hand, the anti-clockwise composition takes a point $(v,O,\alpha)$ and regards is as a
submanifold in $\C^n \times \R$ equipped with a map to $X$ plus a point $v$ in the ambient space,
and does Thom collapse.  One immediately sees that this is thus the same as the pre-transfer
composed with loop evaluation.
\end{proof}

We now deduce our first theorem from the above proposition.


\begin{proof}[Proof of Theorem \ref{big-theorem-X}]
  Let $\ev: L^2X \to LX$ denote the loop evaluation map for the outer $L$, as well as the maps
  induced by this. We consider $L^2X$ with the circle action given by rotating the outer $L$.
  Now consider the diagram below:
\[
\begin{diagram}
\node[2]{QLX_+} \arrow{se,b}{\mu^*} \arrow{see,t}{\id}\\
\node{Q\Sigma (LX_{hS^1})_+} \arrow{s,r}{} \arrow{e,t}{\mu^*} \arrow{ne,t}{\trf}
\node{Q\Sigma (L^2 X_{hS^1})_+} \arrow{s,r}{}  \arrow{e,t}{\trf}
\node{Q L^2X_+} \arrow{e,t}{\ev}
\node{QLX_+}
\\
\node{|\CIRC(X)|} \arrow{e,t}{\mu^*} 
\node{|\CIRC(LX)|} \arrow[2]{e,J}
\node[2]{|\COB(LX)|.} \arrow{n,r}{\mathrm{PT}}
\end{diagram}
\]
The lower left square commutes since the horizontal arrows are both induced by the multiplication
map $\mu$.  The lower right square commutes up to homotopy by Proposition \ref{prop:trf-ev} applied to
the space $LX$.  The upper left part commutes since the circle transfer is natural with respect to
$S^1$-equivariant maps.  The upper right part commutes since the composition
\[
\{1\} \times S^1 \hookrightarrow S^1 \times S^1 \stackrel{\mu}{\to} S^1
\]
is equal to the identity on $S^1$.
\end{proof}


\section{The cylinder subcategory of the surface category}

Consider the 2-dimensional cobordism category $\SURF(X)$.  In this section we study the
subcategory $\CYL(X)$ whose morphisms are only those surfaces that are topologically trivial cobordisms
(i.e., a disjoint union of cylinders) from a collection of circles to another collection of the same
number of circles.

\begin{lemma}
There is a homotopy equivalence 
$\Obj_{\CYL(X)} \simeq \coprod_{k \in \mathbb{N}} (LX_{hS^1})^k \times_{\Sigma_k} E\Sigma_k.$
\end{lemma}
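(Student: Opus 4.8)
The plan is to discard the contractible auxiliary data in an object, decompose the remaining space according to the number of circle components, and then unwind a wreath product. Since $\CYL(X)$ has, by definition, the same objects as $\SURF(X)$, it suffices to analyse $\Obj_{\SURF(X)}$. An object is a tuple $(M,f,\phi,a)$ in which $M$ is a closed oriented $1$-manifold, hence canonically a disjoint union of $k\ge 0$ circles; this partitions the object space as a disjoint union indexed by $k\in\N$. Fix $k$. The parameter $a$ contributes a contractible $\R$-factor; for a fixed embedded $M$ the tubular-neighbourhood data $\phi$ ranges over a convex, hence contractible, space; and the space of embeddings of $\coprod_k S^1$ into $\R^\infty=\colim_n\R^n$, together with such tubular data, is weakly contractible and carries a free action of $\Diff^+(\coprod_k S^1)$, so it is a model for $E\Diff^+(\coprod_k S^1)$; this is the standard identification underpinning \cite{GMTW} and \cite{Madsen-Tillmann}. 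Consequently the $k$-th summand of $\Obj_{\CYL(X)}$ is weakly equivalent to the homotopy quotient $\Map(\coprod_k S^1,X)_{h\Diff^+(\coprod_k S^1)}$, where $\Diff^+$ acts by precomposition.

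Next I would unwind the group. An orientation of $\coprod_k S^1$ is an orientation of each circle, so $\Diff^+(\coprod_k S^1)\cong\Diff^+(S^1)\wr\Sigma_k=\Diff^+(S^1)^k\rtimes\Sigma_k$, and $\Map(\coprod_k S^1,X)=(LX)^k$ with $\Diff^+(S^1)^k$ acting factorwise and $\Sigma_k$ permuting. For a topological group $G$ of the homotopy type of a CW complex acting on a space $Y$, the space $E\Sigma_k\times(EG)^k$ is a free contractible model for $E(G\wr\Sigma_k)$; dividing out first by the normal subgroup $G^k$ and then by $\Sigma_k$ gives
\[
(Y^k)_{h(G\wr\Sigma_k)}\;\simeq\;E\Sigma_k\times_{\Sigma_k}(Y_{hG})^k.
\]
Applied with $G=\Diff^+(S^1)$ and $Y=LX$, this identifies the $k$-th summand with $E\Sigma_k\times_{\Sigma_k}\bigl(LX_{h\Diff^+(S^1)}\bigr)^k$.

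Finally I would replace $\Diff^+(S^1)$ by $SO(2)=S^1$. The inclusion $SO(2)\hookrightarrow\Diff^+(S^1)$ is a homotopy equivalence of topological groups, and its precomposition action on $LX$ is precisely the standard rotation action; hence $LX_{hS^1}\to LX_{h\Diff^+(S^1)}$ is a weak equivalence (the fibres are copies of $\Diff^+(S^1)/SO(2)$, which is weakly contractible because $BSO(2)\to B\Diff^+(S^1)$ is a weak equivalence), and therefore so is $E\Sigma_k\times_{\Sigma_k}(LX_{hS^1})^k\to E\Sigma_k\times_{\Sigma_k}(LX_{h\Diff^+(S^1)})^k$. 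Taking the disjoint union over $k\in\N$ yields the claimed equivalence. The only step requiring real care is the first one: checking that the embedding-and-tubular-neighbourhood data in an object really is parametrised by a model for $E\Diff^+(\coprod_k S^1)$, so that the object space is exactly the homotopy quotient asserted and carries no extra homotopy; everything afterwards is formal manipulation of (homotopy) quotients.
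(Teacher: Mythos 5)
Your proof is correct and follows essentially the same route as the paper's: decompose $\Obj_{\CYL(X)}$ by the number of circles, use contractibility of the embedding (and tubular-neighbourhood) data in $\R^\infty$ to identify each component as the homotopy quotient of $(LX)^k$ by $\Diff^+\bigl(\coprod_k S^1\bigr)$, and then use $\Diff^+(S^1)\simeq S^1$ and the wreath-product structure to obtain $(LX_{hS^1})^k\times_{\Sigma_k}E\Sigma_k$. The paper states this more tersely (and with the wreath-product and $\Diff^+(S^1)\simeq SO(2)$ steps left implicit), so your write-up simply supplies details the paper takes for granted; the only loose phrase is calling the space of tubular-neighbourhood data ``convex'' --- it is contractible by the standard uniqueness-of-tubular-neighbourhoods argument rather than by literal convexity.
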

\begin{proof}
Any object is a disjoint union of circles, so the space of objects breaks up as a disjoint union
with components corresponding to the number of circles.  The component corresponding to $k$ circles
has the homotopy type
\[
\left. \Emb\left(\coprod_k S^1, \mathbb{R}^\infty \right) \times LX \middle/ \Diff\left(\coprod_k
    S^1\right) \right. \simeq (LX_{hS^1})^k \times_{\Sigma_k} E\Sigma_k,
\]
since the space of embeddings is contractible by the Whitney embedding theorem and the
diffeomorphism group acts freely and locally trivially.
\end{proof}

\begin{lemma}
The inclusion $\Obj_{\CYL(X)} \hookrightarrow |\CYL(X)|$ is a weak homotopy equivalence.
\end{lemma}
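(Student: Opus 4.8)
The plan is to show that the inclusion of the object space into the realisation of the nerve is a weak equivalence by exhibiting a deformation retraction of $|\CYL(X)|$ onto $\Obj_{\CYL(X)}$, using the fact that every non-identity morphism of $\CYL(X)$ is invertible up to the choice of collar data. Concretely, since each morphism is a disjoint union of cylinders, each cylinder can be ``run backwards'': reflecting the interval $[a_1,a_2]$ and relabelling the two boundary components produces a morphism in the opposite direction, and the composite in either order is diffeomorphic (rel boundary) to a product cobordism, hence connected by a path of morphisms to an identity. Thus $\CYL(X)$ is a topological category in which every morphism becomes invertible after passing to path components of the morphism space, i.e. it is, up to homotopy, a groupoid.

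First I would make this precise by identifying the space of non-identity morphisms between two fixed objects with a product of pieces, each contributing a circle's worth of reparametrisation data together with the length parameter $a_2-a_1 > 0$, and observing that the length parameter admits an obvious contraction (scaling $a_2 - a_1 \to 0$) which, together with the collar condition in the definition of $\Mor_{\mathscr{C}_d}$, degenerates a thin cylinder onto its boundary. Next I would invoke the standard principle (Segal, and as used in \cite{GMTW} and \cite{Madsen-Tillmann}) that for a topological category whose morphism spaces are, after group completion in the above sense, equivalences, the inclusion of objects into the realisation of the nerve is a weak equivalence; alternatively, one can argue directly that the semi-simplicial space $N_\bullet \CYL(X)$ is, degreewise, homotopy equivalent via the collar-shrinking maps to the constant simplicial space on $\Obj_{\CYL(X)}$, and hence induces an equivalence on realisations — this is exactly the style of argument used in Proposition \ref{path-prop}.

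The cleanest route is probably the degreewise one: define, for each $t \in [0,1]$, a map $N_p \CYL(X) \to N_p \CYL(X)$ that scales every length parameter appearing in a $p$-simplex by $t$, landing at $t=0$ in the subspace of chains of identity morphisms, which is canonically $\Obj_{\CYL(X)}$; check that these maps are simplicial in $p$ and continuous, so that they assemble to a deformation retraction of $|\CYL(X)|$ onto the image of $\Obj_{\CYL(X)}$; and check that the collar condition guarantees that the $t\to 0$ limit of a cylinder of shrinking length is genuinely the constant path at its (common) boundary object, so the retraction is well-defined and continuous at $t=0$.

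The main obstacle I anticipate is the continuity and well-definedness of the retraction as the length parameter goes to zero: one must make sure that a family of cylinders whose length tends to $0$, together with their embeddings and tubular-neighbourhood data, converges (in the colimit-of-embeddings-modulo-automorphisms topology described in section \ref{sec:cobordism-categories}) to the corresponding object, and this uses precisely the hypothesis built into the definition of $\Mor_{\mathscr{C}_d}$ that near the endpoints the cobordism, the map to $X$, and the tubular neighbourhood are all products, independent of the interval coordinate. Once that compatibility is unwound, the rest is formal.
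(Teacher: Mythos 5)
Your plan has a genuine gap at its core: the length-shrinking retraction is not defined, let alone continuous, at $t=0$. A non-identity morphism of $\CYL(X)$ is an embedded cylinder in $[a_1,a_2]\times\R^\infty$ whose two ends are in general \emph{different} objects---different embedded circles carrying different maps to $X$ (the collar condition only forces a product structure near each end; it does not force the two ends to agree, nor the cylinder to be a product globally). So there is no ``(common) boundary object'' whose identity could serve as the $t=0$ value of your homotopy; choosing the identity of the source (or target) instead is not continuous, and in any case the morphism space is topologised with the identities as a separate piece and the non-identity morphisms as a bundle over the \emph{open} half-plane $a_1<a_2$, so a family of cylinders whose length tends to $0$ has no limit in $\Mor_{\CYL(X)}$ at all (in particular a cylinder composed with its reverse is \emph{not} connected by a path of morphisms to an identity). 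The two shortcuts you mention also fail: the nerve is not degreewise equivalent to the constant simplicial space on $\Obj_{\CYL(X)}$ (already $N_1\CYL(X)$ is the disjoint union of the identities and the non-identity morphisms, i.e.\ roughly two copies of $\Obj_{\CYL(X)}$), and the ``standard principle'' that a category which is a groupoid up to homotopy has $\Obj \hookrightarrow |\mathcal{C}|$ a weak equivalence is false---a topological group viewed as a one-object groupoid realises to its classifying space, not to a point.

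Two ingredients are needed that your sketch does not supply. First, a geometric input dealing with the cylinders whose two ends differ: the space of non-identity morphisms (for fixed length) is homotopy equivalent to $\Obj_{\CYL(X)}$ via the source map, with the extension-to-a-product-cylinder map $e_a$ as a homotopy inverse section; the paper gets this from Gramain's theorem that $\Diff(S^1\times I)\to\Diff(S^1)$ is a homotopy equivalence together with the retraction $\Map(S^1\times I,X)\to LX$. Second, the length-zero degeneration must be handled without paths from positive-length morphisms to identities: the paper compares $N_\bullet\CYL(X)$ levelwise with $\Obj_{\CYL(X)}\times N_\bullet M$, where $M=\{0\}\sqcup(0,\infty)$ is the additive monoid of lengths, and shows $|N_\bullet M|$ is weakly contractible by mapping $M$ onto the multiplicative monoid $\{0,1\}$; the contraction happens only after geometric realisation, not through a simplicial homotopy in the morphism spaces. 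With these two repairs your outline essentially becomes the paper's argument; as written, the central step (the retraction at $t=0$) does not exist.
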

\begin{proof}
Let $M$ denote the additive topological monoid $\{0\}\sqcup (0,\infty)$.  There is a homomorphism from
$M$ to the multiplicative monoid $M' =\{0,1\}$ given by sending $0 \mapsto 1$ and all other elements
to $0$, and this homomorphism induces a level-wise homotopy equivalence on nerves, and hence a weak
homotopy equivalence of geometric realizations.  The realization of $M'$ is contractible since it is a $K(\pi,1)$
for which the group $\pi$ is the group completion of $M'$, and this group completion is trivial.

Now consider the simplicial space
\[
Z_\bullet = \Obj_{\CYL(X)} \times N_\bullet M.
 \]
 Since the geometric realization of $N_\bullet M$ is weakly contractible, it follows that the
 inclusion $\Obj_{\CYL(X)} \hookrightarrow |Z_\bullet|$ is a weak equivalence.

We will now define a map of simplicial spaces
\[
\gamma: Z_\bullet \to N_\bullet \CYL(X).
\]
A diffeomorphism $\varphi$ of a circle extends to a diffeomorphism $\varphi\times \mathrm{id}_{[0,a]}$ of
the cylinder $S^1 \times [0,a]$, and a map $S^1 \to X$ extends to a map $S^1 \times [0,a] \to X$
constant in the interval direction.  Extending in this way induces a map $e_a$ from $\Obj_{\CYL(X)}$
to the space $\Mor^0_{\CYL(X)}$ of non-identity morphisms.  The map $\gamma$ is the identity on
0-simplices; on 1-simples it is given by
\[
\left(a \in M, X\in \Obj_{\CYL(X)}\right) \mapsto \begin{cases} e_a(X) & a\neq 0 \\ \mathrm{Id}_X & a=0, \end{cases}
\]
and this also determines in on higher simplices.

Finally, we show that $\gamma$ induces a weak equivalence on geometric realizations.  The extension
map $e$ is a section of the source map, which is induced by restricting to the left end of each
cylinder. Since the restriction map $\Diff(S^1 \times I) \to \Diff(S^1)$ is a homotopy equivalence
by \cite[Th\'eor\`eme 1]{Gramain}, and the restriction map $\Map(S^1 \times I, X) \to LX$ is an
equivalence by retracting the cylinder down to one of its ends, it follows that the source map
$\Mor^0_{\CYL(X)} \to \Obj_{\CYL(X)}$ and the extension map $e$ are both homotopy equivalences.
Thus $\gamma$ is a level-wise homotopy equivalence of simplicial spaces, and hence it induces a weak
equivalence of geometric realizations.
\end{proof}

The $E_\infty$ product on $\SURF(X)$ restricts to an $E_\infty$ product on $\CYL(X)$, but this
subcategory is not group complete.  In light of the above lemma, we have:

\begin{proposition}\label{prop:cyl-group-completion}
There is a weak equivalence $\Omega B |\CYL(X)| \simeq Q(LX_{hS^1})_+$.
\end{proposition}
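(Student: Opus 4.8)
The plan is to recognise $|\CYL(X)|$, as an $E_\infty$-space, as the free $E_\infty$-space on $LX_{hS^1}$, and then to read off its group completion from the Barratt--Priddy--Quillen--Segal theorem. By the two lemmas above, the inclusion of objects $\Obj_{\CYL(X)}\hookrightarrow|\CYL(X)|$ is a weak equivalence and $\Obj_{\CYL(X)}\simeq\coprod_{k\in\N}(LX_{hS^1})^k\times_{\Sigma_k}E\Sigma_k$. The symmetric monoidal structure on $\CYL(X)$ obtained by placing cobordisms disjointly side by side in $\R^\infty$ (the restriction of the one on $\SURF(X)$) makes $|\CYL(X)|$ an $E_\infty$-space, and on objects this product is disjoint union. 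So the first task is to check that the equivalence $\Obj_{\CYL(X)}\simeq\coprod_k(LX_{hS^1})^k\times_{\Sigma_k}E\Sigma_k$ and the inclusion $\Obj_{\CYL(X)}\hookrightarrow|\CYL(X)|$ are both maps of $E_\infty$-spaces; I would do this by running the proofs of the two lemmas with the associated $\Gamma$-spaces (special $\Delta$-spaces) in hand rather than bare spaces.

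Next I would note that, equipped with the disjoint-union product, $\coprod_k(LX_{hS^1})^k\times_{\Sigma_k}E\Sigma_k$ is precisely the standard model $C_\infty(LX_{hS^1})$ for the free $E_\infty$-space on the space $LX_{hS^1}$. The identification of the $k$-th summand uses the homotopy equivalence $\Diff^+(\coprod_kS^1)\simeq\Sigma_k\wr S^1$ already invoked above, together with contractibility of the embedding and tubular-neighbourhood spaces, so that the $k$-th component of $\Obj_{\CYL(X)}$ is $(LX_{hS^1})^k_{h\Sigma_k}$, as in the first lemma. The Barratt--Priddy--Quillen--Segal theorem says that the group completion $\Omega B$ of the free $E_\infty$-space on a space $Y$ is naturally weakly equivalent to $\Omega^\infty\Sigma^\infty(Y_+)=Q(Y_+)$; applied with $Y=LX_{hS^1}$ this gives the asserted equivalence $\Omega B|\CYL(X)|\simeq Q(LX_{hS^1})_+$.

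It is worth contrasting this with the earlier proposition for $\CIRC(X)$. There the category has a single object, so $|\CIRC(X)|$ is already a delooping of the morphism monoid and one obtains $|\CIRC(X)|\simeq Q\Sigma(LX_{hS^1})_+$ directly; here the objects themselves carry the free $E_\infty$-structure, $|\CYL(X)|$ is weakly equivalent to that (non-grouplike) space, and the remaining loop $\Omega B$ must be taken by hand.

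The step requiring the most care is the first one --- promoting the two lemmas to statements about $E_\infty$-spaces. The potential difficulty is that the simplicial contraction in the second lemma, which inserts identity cylinders, must be shown to interact well with the side-by-side product; this holds because a disjoint union of identity morphisms is again an identity morphism, so the simplicial map $\gamma$ constructed there is symmetric monoidal, but one wants the whole argument phrased so that $\Omega B$ is applied to a diagram of $E_\infty$-maps. The cleanest route is to observe at the outset that $\CYL(X)$ is a symmetric monoidal topological category, pass once and for all to its $\Gamma$-space, and carry out both lemmas inside that framework.
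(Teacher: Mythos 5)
Your proposal is correct and follows essentially the same route as the paper, which deduces the proposition directly from the two lemmas by recognising $|\CYL(X)|\simeq\Obj_{\CYL(X)}\simeq\coprod_k(LX_{hS^1})^k\times_{\Sigma_k}E\Sigma_k$ as the free $E_\infty$-space on $LX_{hS^1}$ and applying the Barratt--Priddy--Quillen--Segal theorem to its group completion. Your added care about making the lemmas' equivalences $E_\infty$-maps (e.g.\ via $\Gamma$-spaces) is a reasonable filling-in of details the paper leaves implicit.
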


We are now ready to prove Theorem \ref{thm2}.
\begin{proof}
Since the space of objects of $\CIRC(X)$ is just $\mathbb{R}$, which is contractible, it follows
that the inclusion 
\[
\Mor_{\CIRC(X)} \hookrightarrow \Omega |\CYL(X)|
\]
is a homotopy equivalence.  We will show that the diagram
\begin{equation}\label{eq:final-square}
\begin{diagram}
\node{\Mor_{\CIRC(X)}} \arrow{s,l}{\mathrm{PT}} \arrow{e,t}{\simeq} \node{\Obj_{\CYL(X)}} \arrow{e} \node{|\SURF(X)|} \arrow{s,lr}{\simeq}{\mathrm{PT}}\\
\node{\Omega QX_+} \arrow[2]{e} \node[2]{\Omega^{\infty-1} MTSO(2)\wedge X_+}
\end{diagram}
\end{equation}
is homotopy commutative.  The desired result will follow from this diagram combined with Proposition
\ref{prop:cyl-group-completion} and Theorem \ref{big-theorem-X}. 

Consider $BSO(1) = S(\R^\infty) \simeq *$.  We replace $\Omega QX_+$ with the homotopy equivalent
space $\Omega^\infty MTSO(1) \wedge X_+$. Correspondingly, we replace the bottom horizontal arrow
 with the map
\[
\Omega^\infty MTSO(1) \wedge X_+ \to \Omega^{\infty-1} MTSO(2) \wedge X_+
\]
induced by the map of spectra $MTSO(1) \to \Sigma MTSO(2)$ coming from the stabilisation map
$BSO(1) \to BSO(2)$.

The map $\Obj_{\CYL(X)} = \Obj_{\SURF(X)} \to |\SURF(X)|$ corresponds to thinking of a circle in
$\R^\infty$ as the germ of a straight cylinder in $\R^\infty \times \R$, and hence classifying the
tangent bundle in $BSO(1)$ and then mapping to $BSO(2)$ is the same as extending the circle to the
germ of a cylinder and then classifying the tangent bundle in $BSO(2)$.  It now follows that the
modified version of the square \eqref{eq:final-square} commutes strictly, and hence the original
square commutes up to homotopy.
\end{proof}

\bibliographystyle{amsalpha} 
\bibliography{circle-transfer-bib}

\newcommand{\etalchar}[1]{$^{#1}$}
\providecommand{\bysame}{\leavevmode\hbox to3em{\hrulefill}\thinspace}
\providecommand{\MR}{\relax\ifhmode\unskip\space\fi MR }
\providecommand{\MRhref}[2]{%
  \href{http://www.ams.org/mathscinet-getitem?mr=#1}{#2}
}
\providecommand{\href}[2]{#2}
\begin{thebibliography}{GMTW06}

\bibitem[Bak88]{baker1}
Andrew Baker, \emph{On the detection of some elements in the image of the
  double transfer using {$K(2)$}-theory}, Math. Z. \textbf{197} (1988), no.~3,
  439--454.

\bibitem[BCG{\etalchar{+}}88]{baker2}
A.~Baker, D.~Carlisle, B.~Gray, S.~Hilditch, N.~Ray, and R.~Wood, \emph{On the
  iterated complex transfer}, Math. Z. \textbf{199} (1988), no.~2, 191--207.

\bibitem[BHM93]{BHM}
M.~B{\"o}kstedt, W.~C. Hsiang, and I.~Madsen, \emph{The cyclotomic trace and
  algebraic {$K$}-theory of spaces}, Invent. Math. \textbf{111} (1993), no.~3,
  465--539.

\bibitem[GMTW06]{GMTW}
S.~Galatius, I.~Madsen, U.~Tillmann, and M.~Weiss, \emph{The homotopy type of
  the cobordism category}, Ar{X}iv: {math.AT/0605249}, 2006.

\bibitem[Gra73]{Gramain}
Andr\'e Gramain, \emph{Le type d'homotopie du groupe des diff\'eomorphismes
  d'une surface compacte}, Ann. Sci. \'Ecole Norm. Sup. (4) \textbf{6} (1973),
  53--66.

\bibitem[Ima91]{Imaoka}
M.~Imaoka, \emph{{$\mu$}-elements in {$S\sp 1$}-transfer images}, Osaka J.
  Math. \textbf{28} (1991), no.~4, 975--984.

\bibitem[LMSM86]{May-equivariant}
L.~G. Lewis, Jr., J.~P. May, M.~Steinberger, and J.~E. McClure,
  \emph{Equivariant stable homotopy theory}, Lecture Notes in Mathematics, vol.
  1213, Springer-Verlag, Berlin, 1986.

\bibitem[Mil82]{Miller}
Haynes Miller, \emph{Universal {B}ernoulli numbers and the
  {$S\sp{1}$}-transfer}, Current trends in algebraic topology, Part 2 (London,
  Ont., 1981), CMS Conf. Proc., vol.~2, Amer. Math. Soc., Providence, RI, 1982,
  pp.~437--449.

\bibitem[MS00]{Madsen-Schlichtkrull}
I.~Madsen and C.~Schlichtkrull, \emph{The circle transfer and {$K$}-theory},
  Geometry and topology: Aarhus (1998), Contemp. Math., vol. 258, Amer. Math.
  Soc., Providence, RI, 2000, pp.~307--328.

\bibitem[MT01]{Madsen-Tillmann}
I.~Madsen and U.~Tillmann, \emph{The stable mapping class group and
  {$Q(\mathbb{C}P^\infty_+)$}}, Invent. Math. \textbf{145} (2001), no.~3,
  509--544.

\bibitem[Muk82]{Mukai1}
J.~Mukai, \emph{The {$S\sp{1}$}-transfer map and homotopy groups of suspended
  complex projective spaces}, Math. J. Okayama Univ. \textbf{24} (1982), no.~2,
  179--200.

\bibitem[Muk94]{Mukai2}
\bysame, \emph{The element {$\overline\kappa$} is not in the image of the
  {$S\sp 1$}-transfer}, Math. J. Okayama Univ. \textbf{36} (1994), 133--143
  (1995).

\bibitem[Rog03]{Rognes}
John Rognes, \emph{The smooth {W}hitehead spectrum of a point at odd regular
  primes}, Geom. Topol. \textbf{7} (2003), 155--184 (electronic).

\end{thebibliography}

\end{document}